\newtheorem{example}{Example}
\newtheorem{lemma}{Lemma}
\newtheorem{proposition}{Proposition}
\newtheorem*{proof}{Proof}
\newtheorem{definition}{Definition}
\newtheorem{remark}{Remark}
\begin{document}

\title{$\alpha$-limit sets and Lyapunov function for maps with one topological attractor}

\author{ Yiming Ding \& Yun Sun}

\address{  Center for Mathematical Sciences and Department of Mathematics, Wuhan University of Technology, Wuhan 430070, China}

\ead{dingym@whut.edu.cn and sunyun@whut.edu.cn }
\vspace{10pt}
\begin{indented}
\item[December 2020]
\end{indented}
\begin{abstract}
We consider the topological behaviors of continuous maps with one topological attractor on compact metric space $X$. This kind of map is a generalization of maps such as topologically expansive Lorenz map, unimodal map without homtervals and so on. We provide a leveled $A$-$R$ pair decomposition for such maps, and characterize $\alpha$-limit set of each point. Based on weak Morse decomposition of $X$, we construct a bounded Lyapunov function $V(x)$, which give a clear description of orbit behavior of each point in $X$ except a meager set.
\\
\\{Keywords:attracting set; leveled $A$-$R$  decomposition; $\alpha$-limit set; Lyapunov function }
\\ Mathematics Subject Classification numbers: 37B25; 37B35; 54H20
\end{abstract}

\section{Introduction}  %注入一级标题示例,大括号内替换文章的一级标题
\par Let $(X,d)$ be a compact metric space and $f:X\to X$ be a continuous map. Suppose $Y\subseteq X$, $Y$ is invariant if $f(Y)= Y$,  $Y$ is forward invariant if $f(Y)\subseteq Y$, $Y$ is backward invariant if $f^{-1}(Y)\subseteq Y$, and it is bi-invariant if $Y$ is forward invariant and backward invaraint. $\forall x\in X$, $orb(x)$ is the orbit with initial value $x$, $\omega(x)=\bigcap_{n\geq0}\overline{\bigcup_{k\geq n}f^{k}(x)}$ and $\alpha(x)=\bigcap_{n\geq0}\overline{\bigcup_{k\geq n}\{f^{-k}(x)\}}$. $f$ is topologically transitive on $X$ if there exists a point $x\in X$ such that the orbit of $x$ is dense in $X$. A point $x\in X $ is nonwandering for $f$ if for any neighborhood $U$ of $x$, there exists $n>0$ such that $f^{n}(U)\cap U\neq \emptyset$, we denote by $\Omega(f)$ the set of nonwandering points. Let \textsl{int}$(Y)$ be the interior of $Y$, if \textsl{int}$(\overline{Y})=\emptyset$, $Y$ is nowhere dense. $Y$ is a meager set if $Y$ is the union of countable nowhere dense sets. Denote $d(x,Y)$ as the distance between $x$ and $Y$.
\par Attractor is one of the main concepts in the theory of dynamical systems, was early studied in the literatures \cite{ref1,ref2,ref7,ref4,ref5,ref3}. Milnor \cite{ref6} put forward the definition of metric attractor. A metric attractor is an invariant compact subset $A$ such that its basin of attraction $\mathcal{B}(A)=\{x\in X:\omega(x)\subset A\}$ has positive Lebesgue measure, while any smaller invariant compact set does not have this property \cite{ref6}. Since then the metric properties of attractor have been studied extensively \cite{ref16,ref17,ref18}. Here we focus on topological attractors. According to Smale \cite{ref4}, topological attractor $A$ must have a neigborhood $U$ so that $A$ is equal to the intersection of the images $f^{m}(U)$ for $m>0$. A related but simpler definition was given in \cite{ref5}:``A subset $A$ of $\Omega(f)$ is an attractor of $f$, provided it is indecomposable and has a neighborhood $U$ such that $f(U)\subset U$ and $\bigcap_{i>0}f^{i}(U)=A$", where $A$ is indecomposable means it's not the union of two disjoint closed invariant subsets. Collet and Eckmann \cite{ref7} introduced the attracting set, and attractor is an attracting set with a dense orbit. Based on the ideas above, we give a definition of attracting set and topological attractor (see Definition 2.1).
\par Let $\mathcal{A}_{f}$ be the collection of all attracting sets of $f$. In this paper we consider maps $f\in\mathcal{F}$ which satisfy the following  two assumptions:
\par (1) $f$ has unique topological attractor $A\in\mathcal{A}_{f}$.
\par (2) $\lim_{n\to\infty}A_{n}=A$, where $\{A_{n}\}\subseteq \mathcal{A}_{f}$ and $A_{n-1}\supsetneqq A_{n}$ $(n\geq1)$.
 \par For any $f\in\mathcal{F}$, we consider $f$ with only one attractor since $\alpha$-limit sets could be complex when attractor is not unique. If the number of attracting sets of $f$ is infinite and $A_{n-1}\supsetneqq A_{n}$ for all $n\geq1$, then any infinite sequence $\{A_{n}\}\subseteq \mathcal{A}_{f}$ converges to $A$; if there are finite attracting sets of $f$, then any attracting set is not the limit set. The second assumption implies $\mathcal{A}_{f}$ has at most one limit set which can only be the attractor. For instance, topologically expansive Lorenz maps and unimodal maps without homtervals satisfy these two assumptions. It is interesting to know whether there exists $f$ with one topological attractor such that the limit sets of $\mathcal{A}_{f}$ are not unique.
\par We are concerned with the $\alpha$-limit sets of $f\in \mathcal{F}$.
A lot of studies have been done on $\omega(x)$ because it shows the target of the orbit of $x$, while $\alpha(x)$ may be regarded as the source of the orbit of $x$. In particular, some important dynamical concepts such as internally chain transitive sets \cite{ref19}, homoclinic orbit \cite{ref8}, asymptotic periodicity \cite{ref26}, Morse decomposition \cite{ref9} and fake link \cite{ref10}, are defined via $\alpha$-limit sets. The $\alpha$-limit set often refers to homeomorphisms because the inverse $f^{-1}(x)$ is just one point. For an endomorphism $f$, it seems that the $\alpha$-limit set is more difficult to understand than the $\omega$-limit set in general, because the pre-image $\{f^{-k}(x)\}$ is more complex than a forward iterate $f^{k}(x)$. But $f$ may not have so many different $\alpha$-limit sets because $\alpha$-limit set is large in some sense. $\alpha$-limit sets of topologically expansive Lorenz maps can be characterized via consecutive renormalization process \cite{ref11}, so it is for unimodal maps without homtervals \cite{ref12}. In fact, the minimal renormalization corresponds to the maximal attracting set, we shall adapt this idea to $f\in\mathcal{F}$ and characterize the $\alpha$-limit sets of a general continuous map.
\par Isolated invariant sets (see Definition 4.3) are singled out because they can be continued to nearby equations in a natural way, in this sense they are stable objects. Conley \cite{ref9} obtained the fundamental decomposition theorem of isolated invariant sets and extended it to Morse decomposition. Since then the Conley index theory had been studied extensively \cite{ref22,ref20,ref23,ref28}. Liu \cite{ref20} studied the Conley index for random dynamical systems, Wang \cite{ref28} studied the Conley index and shape Conley index in general metric spaces. Here we consider the map $f\in\mathcal{F}$ on the compact metric space $X$. Observe that proper attracting set and corresponding repelling set could be intersected, but the interior is always disjoint. In section 4, we prove that they are both isolated invariant sets when they are not intersected. Conley's work focused on flow or homeomorphism, we adapt the main ideas to a general continuous map and obtain a bounded Lyapunov function $V(x)$.
\par In this paper, we consider the topological behaviors of $f\in\mathcal{F}$ on compact metric space $X$. This kind of map is a generalization of maps such as topologically expansive Lorenz map, unimodal map without homtervals and so on. It is clear that $X$ is not an isolated invariant set, but $X$ may be decomposable when $f$ is topologically transitive on $X$. We first introduce the $A$-$R$ pair decomposition of $X$ and obtain the leveled $A$-$R$ pair decomposition of $X$, then use it to characterize $\alpha$-limit set of each point. Since the attracting set and corresponding repelling set may be intersected, we slightly modify the definition of Morse sets and obtain the weak Morse decomposition of $X$. Based on weak Morse decomposition, we construct a bounded Lyapunov function $V(x)$, which can give a clear description of orbit behavior of each point in $X$ except a meager set.
\par The paper is organized as follows. We introduce leveled $A$-$R$ decomposition of $X$ in Section 2, and characterize the $\alpha$-limit set of each point in Section 3. In Section 4 we introduce weak Morse sets and obtain a bounded Lyapunov function $V(x)$ via weak Morse decomposition of $X$. Although we characterize the topological properties of $\alpha$-limit sets of $f\in \mathcal{F}$, the metric properties of such sets are not clear. Under some regularity conditions, it is still unknown if the $\alpha$-limit set is of Lebesgue measure zero, and if the Bowen dimension formula holds. For maps with more than one attractor, even the topological characterizations of $\alpha$-limit sets are still unclear.

\section{Leveled $A$-$R$ Pair Decomposition}
 \par Conley \cite{ref9} obtained the attractor-repeller pair decomposition of isolated invariant set, but some sets may be decomposable even if they are not isolated invariant set, such as the whole space $X$. We first introduce attracting set and repelling set, and obtain the $A$-$R$ pair decomposition of $X$. Then we consider the continuous decomposition of maximal attracting set and obtain leveled $A$-$R$ pair decomposition of $X$.
\begin{definition} \rm
A compact invariant set $A\subseteq X$ will be called an attracting set if there exists an open set $U\subseteq X$ such that $\forall x\in U$, $\omega(x)\subseteq A$, and denote the set $\mathcal{B}(A)=\{x\in X, \omega(x)\subseteq A\}$ be the basin of attraction.  A topological attractor is an attracting set which contains a dense orbit.
\end{definition}
\par According to Definition 2.1, if $A$ is an attracting set, it is clear that \textsl{int}$(\mathcal{B}(A))$ is a nonempty open set, i.e., \textsl{int}$(\mathcal{B}(A))\neq\emptyset$. The dual repelling set of attracting set $A$ is
$$ R:=\{x\in X, orb(x)\cap\textsl{int}(\mathcal{B}(A))=\emptyset\}.
$$
Notice that $A$ and $R$ may be intersected, but the interior is always disjoint. Denote $L:=A\cap R$ and $(A,R)$ be an $A$-$R$ pair decomposition of $X$, then $$X=A\cup R\cup C,$$ where $C:=C(A,R)=\{x\in X| \omega(x)\subseteq A, \alpha(x)\subseteq R\}$ is the connecting orbit. In fact, $X$ can be divided into $R$ and $(X\setminus R)\cup L$. According to the definition of $R$, $F:=X\setminus R=\bigcup_{n\geq0}f^{-n}(\textsl{int}(\mathcal{B}(A)))$ and $C=X\setminus (R\cup A)=(F\cup L)\setminus A$, it is clear that $\omega(x)\subseteq A$ for all $x\in C$. According to Theorem A which will be proved in Section 3, $\alpha(x)\subseteq R$ for $\forall x\in C $.
\par Topological transitivity is a classical topological property. Suppose $A$ be the unique attractor, then $f$ confined on $A$ is topologically transitive, $A$ is indecomposable and $A$ contains a dense orbit. For any $f\in\mathcal{F}$, we first check whether it is topologically transitive on the $X$ or not. If $f$ is topologically transitive on $X$, then the whole compact space $X$ is indecomposable and itself is a topological attractor. In this case, $A=X$, $R=\emptyset$.
 \begin{lemma} \rm Suppose $f\in\mathcal{F}$, we have the following:
\par (1) If $f$ is not transitive on $X$, then it exists a maximal proper attracting set $A_{1}$.
\par (2) There exists an integer $m$ $(0\leq m\leq+\infty)$ such that we can get a cluster of proper attracting sets ordered as $ A_{1}\supset A_{2}\supset\cdots \supset A_{m}$.

 \end{lemma}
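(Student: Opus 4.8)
The plan is to produce $A_{1}$ as a maximal element, with respect to inclusion, of the family $\mathcal{P}$ of all proper attracting sets (attracting sets different from $X$), by Zorn's lemma, and then to repeat the construction inside $A_{1}$, inside the resulting $A_{2}$, and so on, to obtain the nested cluster in (2). Two elementary facts are used throughout. First, the union of two attracting sets $B'$, $B''$ is again an attracting set, witnessed by the open set $\textsl{int}(\mathcal{B}(B'))\cup\textsl{int}(\mathcal{B}(B''))$. Second, if $\{B_{\lambda}\}$ is a totally ordered family of attracting sets, then $B^{*}:=\overline{\bigcup_{\lambda}B_{\lambda}}$ is a compact invariant attracting set: it is closed in the compact space $X$; it is invariant, since $f(B_{\lambda})=B_{\lambda}$ for every $\lambda$ together with the continuity and compactness of $f$ force $f(B^{*})=B^{*}$; and it is an attracting set because $\bigcup_{\lambda}\textsl{int}(\mathcal{B}(B_{\lambda}))$ is an open set each of whose points has its $\omega$-limit set inside some $B_{\lambda}\subseteq B^{*}$.

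For (1), note that $\mathcal{P}\neq\emptyset$: since $f$ is not topologically transitive on $X$, the unique topological attractor $A$ cannot be all of $X$, for a dense orbit inside $A=X$ would be a dense orbit in $X$; hence $A\in\mathcal{P}$. The delicate point in applying Zorn's lemma to $\mathcal{P}$ is that for a chain $\{B_{\lambda}\}\subseteq\mathcal{P}$ the bound $B^{*}$ is again \emph{proper}. If $B^{*}$ is itself a member of the chain we are done; otherwise, using separability of $X$, one selects a strictly increasing sequence $B_{1}\subsetneq B_{2}\subsetneq\cdots$ within the chain with $\overline{\bigcup_{n}B_{n}}=B^{*}$. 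Were $B^{*}=X$, a short compactness argument would give $B_{n}\to X$ in the Hausdorff metric; but this contradicts assumption (2) in the definition of $\mathcal{F}$, which forces the attractor $A$ to be the only limit in $\mathcal{A}_{f}$ of a monotone sequence of attracting sets, and $A\neq X$. Hence $B^{*}\in\mathcal{P}$, Zorn's lemma applies, and we obtain a maximal proper attracting set $A_{1}$. (When $\mathcal{A}_{f}$ is finite this step is of course immediate.)

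For (2) we recurse. If $f$ is transitive on $X$ there are no proper attracting sets and we take $m=0$; otherwise $A_{1}$ is supplied by (1). Assume inductively that a proper attracting set $A_{k}$, $k\geq1$, has been constructed, and consider the attracting sets of $f$ that are strictly contained in $A_{k}$. If there are none, set $m=k$ and stop. If there is at least one, the argument of (1) applied with $A_{k}$ playing the role of $X$ produces a maximal element $A_{k+1}$ of that collection, with $A_{k+1}\subsetneq A_{k}$. This yields a strictly decreasing chain $A_{1}\supset A_{2}\supset\cdots$; either it terminates, giving a finite $m$, or it does not, in which case $\{A_{n}\}$ is an infinite strictly decreasing sequence of attracting sets, $m=+\infty$, and $A_{n}\to A$ by assumption (2).

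I expect the main obstacle to be exactly the properness assertion in (1), and the strictness $A_{k+1}\subsetneq A_{k}$ in (2): that is, excluding the possibility that the supremum $\overline{\bigcup_{\lambda}B_{\lambda}}$ of a chain of proper attracting sets exhausts the ambient set. This is where assumption (2) defining $\mathcal{F}$ — equivalently, that $\mathcal{A}_{f}$ has at most one limit set, necessarily the attractor $A$ — together with the uniqueness of the topological attractor, is essential; in its absence a chain of proper attracting sets could in principle converge to all of $X$, and no maximal one need exist.
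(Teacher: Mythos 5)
Your proof is correct at the same level of rigor as the paper's, but it reaches the maximal proper attracting set by a genuinely different route. The paper first argues that any two attracting sets must be nested (a non-nested pair would produce two or three attractors, contradicting uniqueness of the attractor), so that $\mathcal{A}_{f}$ is a chain; existence of a maximal proper element then follows because an unbounded increasing subchain would converge, in the Hausdorff metric, to a second limit set of $\mathcal{A}_{f}$, which assumption (2) forbids. You instead apply Zorn's lemma to the poset of proper attracting sets, supplying chain suprema via $\overline{\bigcup_{\lambda}B_{\lambda}}$ and ruling out the supremum being $X$ by exactly the same appeal to assumption (2); note that both you and the paper read that assumption in its strengthened form (``$\mathcal{A}_{f}$ has at most one limit set, necessarily $A$''), since the literal statement only concerns decreasing sequences while your chain is increasing. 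What the paper's nestedness argument buys, and yours does not, is that the maximal proper attracting set is \emph{unique} --- which is what the rest of the paper silently needs for the leveled $A$-$R$ decomposition and the recursion in part (2) to be well defined; Zorn's lemma alone gives only \emph{some} maximal element. Your first ``elementary fact'' (the union of two attracting sets is attracting) is presumably meant to repair this, but you never close the loop: you would still have to exclude the possibility that the union of two maximal proper attracting sets is all of $X$, and the cleanest way to do that is precisely the paper's nestedness observation. What your approach buys in exchange is generality: it does not presuppose that $\mathcal{A}_{f}$ is totally ordered, and your verification that $\overline{\bigcup_{\lambda}B_{\lambda}}$ is a compact invariant attracting set is a useful detail the paper omits. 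The recursion in part (2) is the same in both treatments and inherits the same unproved (but asserted) claim that the restricted map $f|_{A_{k}}$ again lies in $\mathcal{F}$.
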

\begin{proof}\rm(1) If $f$ is not topologically transitive on $X$, denote $A$ be the unique topological attractor. According to Definition 2.1, $A$ is an attracting set. So the existence of attracting set is obtained, next we show these attracting sets are nested. Given any $A_{i},A_{j}\in \mathcal{A}_{f}$, if $A_{i},A_{j}$ are not nested, then they could be disjoint or intersected. For the disjoint case, it will lead to two attractors and for the intersected case, it will lead to three attractors, both two cases contradict with our initial condition on unique attractor. Now applying the second assumption that $\mathcal{A}_{f}$ has at most one limit set which can only be the attractor. Hence the maximal proper attracting set is obtained. Denote $A_{1}$ be the maximal proper attracting set when $f$ is not topologically transitive on $X$.
\par (2) If $f$ is not topologically transitive on $X$, we can obtain a maximal attracting set $A_{1}\subset X$. However, $f$ confined on $A_{1}$ is also a continuous map with one attractor, which means $f_{1}:=f|_{A_{1}}\in\mathcal{F}$ and $(A_{1},f_{1})$ is a dynamical system. If $f_{1}$ is topological transitive on $A_{1}$, then $A_{1}$ is the unique attractor of $f$. If $f_{1}$ is not topologically transitive on $A_{1}$, repeat the process in (1), there exists a proper maximal attracting set $A_{2}\subset A_{1}$, and $f_{2}$ confined on $A_{2}$ is also a map in $\mathcal{F}$. In this way, we can define $f_{m-1}$ is not topologically transitive on $A_{m-1}$ and obtain the attracting set $A_{m}\subset A_{m-1}$. Then we obtain a cluster of attracting sets ordered as: $X=A_{0}\supset A_{1}\supset A_{2}\supset\cdots \supset A_{m}$, and $m$ could be infinite. $\hfill\square$

\end{proof}

\par Denote $m$ be the level of dynamical system $(X,f)$ if the process above can proceed $m$ times consecutively. The level of $(X,f)$ can be used to understand the relationship between attractor and attracting set. Denote $A_{0}:=X$, if $m$ is finite, $A_{m}$ is the unique attractor since $A_{m}$ can not be decomposed; if $m$ is infinite, attractor is the intersection of all attracting sets.
\begin{definition} \rm
If the level of $(X,f)$ is $m$, since each attracting set corresponds a repelling set, there exist $m$ proper attracting sets and $m$ corresponding repelling sets. And the set of all $(A_{i}$,$R_{i})$ $(0\leq i\leq m)$ is called the leveled $A$-$R$ pair decomposition of $X$, denoted as $\{(A_{1},R_{1}), (A_{2},R_{2}),\cdots,(A_{m},R_{m})\}$.
\end{definition}

\begin{example}\rm
Let $f: I=[0,1]\to I$ be an $m$-renormalizable topologically expansive Lorenz map and $c$ be the critical point. We regard the critical point $c$ as two points $c_{+}$ and $c_{-}$, then $f$ is continuous in some sense since $f$ is topologically conjugate to a continuous map on symbolic space \cite{ref25}. According to \cite{ref11}, define $Rf$ to be the minimal renormalization map of $f$ and $[a_{1},b_{1}]$ be the renormalization interval, then $Rf$ confined on $[a_{1},b_{1}]$ is also an expansive Lorenz map. Suppose $E_{1}$ be the minimal proper completely invariant closed set corresponds to the minimal renormalization, i.e., $f(E)=f^{-1}(E)=E$. Put $$e_{1-}=\sup\{x\in E_{1},x<c\} ,\ \ \ \ e_{1+}=\inf\{x\in E_{1},x>c\},$$
the maximal attracting set $A_{1}$ is $orb[a_{1},b_{1}]$ and repelling set $E_{1}=I\setminus \bigcup_{n\geq0}f^{-n}(a,b)$. Then consider the dynamical system $([a_{1},b_{1}],Rf)$ and obtain $A_{2}$ and $E_{2}$. Repeat the process above $m$ times since $f$ is $m$-renormalizable and we can obtain $m$ attracting sets and $m$ repelling sets. Then the leveled $A$-$R$ pair decomposition of $I$ is $\{(A_{1},R_{1}), (A_{2},R_{2}),\cdots,(A_{m},R_{m})\}$.  Notice that if $m$ is finite and the endpoints of $m$th renormalization interval $a_{m}=e_{m-}$ or $b_{m}=e_{m+}$, then $A_{m}\cap E_{m}\neq \emptyset$, but their interior is always disjoint. According to \cite{ref27}, the repelling sets of piecewise linear Lorenz maps are more simple since all the renormalizations are periodic.
\end{example}

\section{$\alpha$-limit Set of each Point}

\par We prove proper repelling sets and proper $\alpha$-limit sets are the same sets from different aspects, and characterize the $\alpha$-limit set of each point via leveled $A$-$R$ pair decomposition of $X$. Notice that attracting set and repelling set may be intersected, hence we particularly describe the $\alpha$-limit sets of intersection points.
\begin{lemma}\rm
 Let $f\in \mathcal{F}$ and the level of $(X,f)$ is $m$, then the following statements hold:
\par (1) $\forall x\in X$, $\alpha(x)$ is a bi-invariant closed set of $f$.
\par (2) Each proper repelling set is a proper bi-invariant closed set.
\par (3) Each proper bi-invariant closed set is a proper $\alpha$-limit set.
\end{lemma}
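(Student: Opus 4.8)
The plan is to prove the three assertions in the stated order, since together they show that, within the class of proper subsets of $X$, the descriptions ``repelling set'', ``bi-invariant closed set'' and ``$\alpha$-limit set'' pick out the same family; Theorem A of Section 3 then closes the cycle by placing every proper $\alpha$-limit set inside a repelling set.

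For (1): writing $P_{n}=\bigcup_{k\ge n}f^{-k}(x)$ we have $\alpha(x)=\bigcap_{n\ge 0}\overline{P_{n}}$, which is closed as an intersection of closed sets, together with $f^{-1}(P_{n})=P_{n+1}\subseteq P_{n}$. Forward invariance I would obtain by the usual approximation: if $y\in\alpha(x)$, pick $y_{n}\in P_{n}$ with $y_{n}\to y$ and $f^{k_{n}}(y_{n})=x$, $k_{n}\ge n$; then $f(y_{n})\to f(y)$ and $f^{k_{n}-1}(f(y_{n}))=x$ with $k_{n}-1\to\infty$, so $f(y)\in\overline{P_{m}}$ for every $m$ (the case $\alpha(x)=\emptyset$ is trivial). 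For backward invariance, let $f(z)=y\in\alpha(x)$, approximate $y$ by $y_{n}\in P_{n}$, pick preimages $z_{n}\in f^{-1}(y_{n})\subseteq P_{n+1}$, extract a convergent subsequence $z_{n}\to z'$ by compactness of $X$, and note $f(z')=y$ and $z'\in\alpha(x)$. The delicate step -- and, I expect, the main obstacle of the whole lemma -- is upgrading this to ``$z$ itself lies in $\alpha(x)$'': one has to use the structure of $f\in\mathcal{F}$, essentially that preimages of points near $y$ must accumulate at \emph{every} preimage of $y$, not merely at one of them.

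For (2): let $O$ denote the (nonempty, open) interior of the basin $\mathcal{B}(A)$; by the remark after Definition 2.1 one has $R=X\setminus F$ with $F=\bigcup_{n\ge 0}f^{-n}(O)$. Since $O$ is open and $f$ continuous, every $f^{-n}(O)$ is open, hence $F$ is open, $R$ is closed, and $R\subsetneq X$ because $O\neq\emptyset$. Forward invariance of $R$ is immediate from $orb(f(x))\subseteq orb(x)$. For backward invariance I would first observe that $\mathcal{B}(A)$ is bi-invariant (because $\omega(z)=\omega(f(z))$), deduce that $O$ is backward invariant by pulling an open neighbourhood back through $f$, hence that $F$ is backward invariant, and conclude $f^{-1}(R)\subseteq R$; the residual subtlety -- that a point of $O$ cannot have a preimage in $R$ -- is of the same nature as the obstacle in (1). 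The level $m$ enters only through Lemma 1: the same reasoning applies to each $(A_{i},R_{i})$ inside the subsystem $(A_{i-1},f_{i-1})$.

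For (3): given a proper bi-invariant closed set $B$, the set $X\setminus B$ is nonempty, open and forward invariant, and no backward iterate $f^{-n}(p)$ of a point $p\notin B$ can meet $B$; hence $\alpha(p)\subseteq\overline{X\setminus B}$ for such $p$, so if $B$ has nonempty interior the point realizing $B$ must lie on a connecting orbit descending from $B$. The plan is then twofold. First, identify $B$ with one of the repelling sets $R_{i}$ of the leveled $A$-$R$ decomposition: the forward-invariant open set $X\setminus B$ carries the dynamics of an attracting set, $f|_{A}$ is transitive and indecomposable so that $A\cap B$ is either all of $A$ or nowhere dense in $A$, and Lemma 1 lets one descend through the levels. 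Second, produce $x\in C(A_{i},R_{i})$ with $\alpha(x)=R_{i}=B$, using Theorem A for the inclusion $\alpha(x)\subseteq R_{i}$ and a density argument for the backward orbit of a suitably chosen $x$ to get the reverse inclusion. This identification of $B$ with some $R_{i}$, together with the realization step, is the technical heart of (3), and both rest on the backward-invariance analysis from (1)--(2).
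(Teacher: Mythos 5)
Your parts (1) and (2) follow essentially the paper's route (closedness from the intersection of closures, forward invariance by the approximating-sequence argument, $R_{i}=X\setminus\bigcup_{n\geq0}f^{-n}(\textsl{int}(\mathcal{B}(A_{i})))$ with $F_{i}$ open and bi-invariant), but the proposal leaves its two hardest steps open rather than closing them. In (1) you correctly isolate the crux — that $z\in f^{-1}(y)$ with $y\in\alpha(x)$ need not obviously lie in $\alpha(x)$, since preimages of the approximating points $x_{k}\in f^{-n_{k}}(x)$ are only guaranteed (by compactness) to accumulate at \emph{some} preimage of $y$, not at $z$ — and you are right that this is the delicate point: the paper's own justification (backward invariance of each $\bigcup_{k\geq n}f^{-k}(x)$ ``implies'' backward invariance of the intersection of their closures) does not survive the passage to closures, because $f^{-1}(\overline{P_{n}})$ can strictly contain $\overline{f^{-1}(P_{n})}$. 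But flagging the obstacle is not resolving it; as written, (1) is not proved. The same unresolved issue reappears in your (2) (``a point of $O$ cannot have a preimage in $R$''), although there it can actually be closed: if $f(z)\in R_{i}$ and $z\in\textsl{int}(\mathcal{B}(A_{i}))$, then $\omega(f(z))=\omega(z)\subseteq A_{i}$, and since $A_{i}$ lies in the open set $\textsl{int}(\mathcal{B}(A_{i}))$ the tail of $orb(f(z))$ enters that open set, contradicting $f(z)\in R_{i}$; you should spell this out rather than assimilate it to the obstacle in (1).

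For (3) your route genuinely diverges from the paper's and is where the real gap lies. The paper's mechanism is a counting/sandwich argument: the proper bi-invariant closed sets of $f$ are exactly the nested family $R_{1}\subset R_{2}\subset\cdots\subset R_{m}$; since $\alpha(x)$ is itself a bi-invariant closed set by (1), for $x\in R_{i}\setminus R_{i-1}$ one gets $R_{i-1}\subsetneqq\alpha(x)\subseteq R_{i}$ (the upper bound because $R_{i}$ is closed and backward invariant, so all preimages of $x$ and their limits stay in $R_{i}$), which forces $\alpha(x)=R_{i}$. Your proposal replaces this with (a) an identification of an arbitrary proper bi-invariant closed set $B$ with some $R_{i}$ ``by descending through the levels'' and (b) an unspecified ``density argument'' to produce $x$ with $\alpha(x)=B$; neither is carried out, and (b) is precisely what the sandwich argument exists to avoid. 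Moreover, invoking Theorem A for the inclusion $\alpha(x)\subseteq R_{i}$ is circular: Theorem A is proved \emph{from} this lemma. To repair (3) you need two concrete ingredients you currently lack: a proof that the $R_{i}$ exhaust the proper bi-invariant closed sets (which is where the hypotheses $f\in\mathcal{F}$ and the level $m$ actually enter), and the direct observation that backward invariance plus closedness of $R_{i}$ already gives $\alpha(x)\subseteq R_{i}$ for $x\in R_{i}$ without any appeal to later results.
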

\begin{proof}\rm (1) $ \forall x \in X$, $\alpha(x)=\bigcap_{n\geq0}\overline{\bigcup_{k\geq n}\{f^{-k}(x)\}}$. For each $n\in \mathbb{N}$, $\bigcup_{k\geq n}\{f^{-k}(x)\}$ is backward invariant under $f$, it follows $ f^{-1}(\alpha(x))\subseteq\alpha(x)$.
Remember $y\in\alpha(x)$ is equivalent to the fact that there exists a sequence $\{x_{k}\}\subset X$ and an increasing sequence $\{n_{k}\}\subset\mathbb{N}$ such that $f^{n_{k}}(x_{k})=x$ and $x_{k}\to y$ as $k\to \infty$. Assume $y\in \alpha(x)$, we have $f(y)\in\alpha(x)$. So we conclude $f(\alpha(x))\subseteq\alpha(x)$.
\par (2) If $f\in \mathcal{F}$ and the level of $(X,f)$ is $m$, then there exist $m$ proper attracting sets. Given a proper attracting set $A_{i}$ $(0<i\leq m)$, denote $F_{i}:=\{x\in X,orb(x)\cap \textsl{int}(\mathcal{B}(A_{i}))\neq\emptyset\}$, then $F_{i}=\bigcup_{n\geq0}f^{-n}(\textsl{int}(\mathcal{B}(A_{i})))$ and $F_{i}$ is a proper bi-invariant open set. According to the definition of repelling set, $$R_{i}= \{x\in X,orb(x)\cap \textsl{int}(\mathcal{B}(A_{i})))=\emptyset\}=X\setminus F_{i},$$ then $R_{i}$ is a proper bi-invariant closed set.
\par(3) According to Definition 2.3, the leveled $A$-$R$ decomposition of $X$ is $\{(A_{1},R_{1}),(A_{2},R_{2}),$ $\cdots,(A_{m},R_{m})\}$. Since $A_{1}\supset A_{2}\supset\cdots \supset A_{m}$ and each proper repelling set is a proper bi-invariant closed set, we have $m$ proper bi-invariant sets ordered as $R_{1}\subset R_{2}\subset\cdots \subset R_{m}$. Next we prove that $R_{i}$ is an $\alpha$-limit set of $f$ for $i=1,2,\cdots m$. Suppose $x\in R_{i}\setminus R_{i-1}$, it must have $\alpha(x)=R_{k}$ for some $k=1,2,\cdots m$, because $f$ admits exact $m$ proper bi-invariant closed sets. By the definitions of $R_{i-1},R_{i}$ and $R_{i+1}$, we can know that $R_{i-1}\subsetneqq\alpha(x)$ because $x\notin R_{i-1}$, and $x\in R_{i}$ implies $\alpha(x)\subseteq R_{i}$. Since $x\notin R_{i+1}\setminus R_{i}$, $\alpha(x)\subsetneqq R_{i+1}$. It follows that $R_{i-1}\subsetneqq\alpha(x)\subsetneqq R_{i+1}$. Hence we can obtain $\alpha(x)=R_{i}$ for $x\in R_{i}\setminus R_{i-1}$, and each proper bi-invariant closed set $R_{i}$ is an $\alpha$-limit set. $\hfill\square$
\end{proof}
\par Now we are ready to characterize the $\alpha$-limit set of each point. Denote $L_{i}:=A_{i}\cap R_{i}$ $(0\leq i\leq m)$ and $L:=\bigcup_{i=0}^{m}L_{i}$. In fact, each repelling set $R_{i}$ is nowhere dense, and $L_{i}\subset R_{i}$ indicates $L_{i}$ is also nowhere dense. Hence $L$ is a meager set.

\par {\bf Theorem A} \ \ Let $f\in \mathcal{F}$ and the level of $(X,f)$ be $m$ $(0\leq m\leq\infty)$, we have:
\par (1) $f$ admits $m$ proper $\alpha$-limit sets listed as
$$ R_{1}\subset R_{2}\subset\cdots \subset R_{m}\subset X.$$
\par (2) If $L=\emptyset$, $\alpha(x)=R_{i}$ $(0< i\leq m)$ if and only if $$x\in A_{i-1}\setminus A_{i},$$ and $\alpha(x)=X$ if and only if $$x\in A:=\bigcap_{i=0}^{m}A_{i}.$$

\begin{proof}\rm (1) By Lemma 3.1, we know that proper repelling set and $\alpha$-limit set of $f$ are the same sets in different aspects. If the level of $(X,f)$ is $m$, then $f$ has exact $m$ proper $\alpha$-limit sets. Follows from the proof of Lemma 3.1, all the $\alpha$-limit sets are defined as $ R_{i}=\{x\in X,orb(x)\cap \textsl{int}(\mathcal{B}(A_{i}))=\emptyset\}$, $i=1,2,\cdots m$, and\begin{center}
{$ R_{1}\subset R_{2}\subset\cdots \subset R_{m}\subset X .$
}
\end{center}
\par (2) If $L=\emptyset$, at first, we describe the set $\{x\in X,\alpha(x)=R_{1}\}$ where $R_{1}$ is the repelling set corresponding to the maximal attracting set $A_{1}$.
\par\textbf{Claim:} $\alpha(x)=R_{1}$ if and only if $x\in A_{0}\setminus A_{1}=X\setminus A_{1}$.
\par By the proof of (1), $R_{1}$ is the minimal proper $\alpha$-limit set of $f$. So $\alpha(x)\supseteq R_{1}$ for all $x\in X$. Let $D$ be the minimal bi-invariant closed set of $f$ confined on $A_{1}$ and $f_{i}:=f|_{A_{i}}$ $(0\leq i\leq m)$. It follows that $D\cap R_{1}=\emptyset$ and $ D\subset R_{2}$. If $x\in X\setminus A_{1}$, i.e., $x\notin A_{1}$, since $A_{1}$ is invariant under $f$, for any $n\in\mathbb{N}$ we have $f^{-n}(x)\cap A_{1}=\emptyset$. So $\alpha(x) \cap A_{1}=\emptyset$, which indicates $\alpha(x)\cap D=\emptyset$. Hence, $\alpha(x)\subsetneqq R_{2}$ and $\alpha(x)=R_{1}$.
\par On the other hand, for any $x\in A_{1}$, since $orb(x,f|_{1})=orb(x,f)\cap A_{1}$, we see that $\alpha(x)\supset \alpha(x,f|_{1})$. By the minimality of $D$, $\alpha(x,f|_{1})\supseteq D$ for all $x\in A_{1}$. So $\alpha(x)\cap D \neq\emptyset$, which implies that $\alpha(x)\neq R_{1}$ for $x\in A_{1}$. The proof of the Claim is completed.
\par For $0\leq i\leq m$, we denote $A_{i}$ as the $ith$ proper attracting set of $f$, and $R_{i}$ as the corresponding repelling set. By the Claim we know that $\alpha(x)=R_{1}$ if and only if
$$x\in X\setminus A_{1}=A_{0}\setminus A_{1}.$$

\par For the case $i=2\leq m$, we consider the map $f_{1}$. According to the Claim, we obtain that $\alpha(x,f_{1})=D$ if and only if $x\in A_{1}\setminus A_{2}$.  It
follows that $\alpha(x)=R_{2}$ if and only if
\begin{center}
{$x\in A_{1}\setminus A_{2}.$
}
\end{center}
\par Repeat the above arguments, we conclude $\alpha(x)=R_{i}$ if and only if
\begin{center}
{$x\in A_{i-1}\setminus A_{i},$\quad  $  {\rm for}\quad0<i\leq m.$
}
\end{center}
\par If $m<\infty$, $f_{m}$ is topologically transitive on $A_{m}$, $\alpha(x,f_{m})=A_{m}$ for all $x\in A_{m}$. Then we have $\alpha(x,f|_{m})\subset\alpha(x)$, and $A_{m}\subset \alpha(x)$ for all $x\in A_{m}$. If $\alpha(x)\neq X$ for any $x\in A_{m}$, then $X\setminus \alpha(x)$ is a bi-invariant open set since $\alpha(x)$ is proper bi-invariant closed, and there exists another attractor in $X\setminus \alpha(x)$, which contradicts with the unique attractor. Hence we can conclude that $\alpha(x)=X$ for all $x\in A_{m}$.
\par For the case $m=\infty$, put $A:=\bigcap_{i=1}^{\infty}A_{i}$. Similarly, $\alpha(x)=X$ for all $x\in A$. The proof of Theorem A is completed. $\hfill\square$

\end{proof}
\begin{remark} \rm If $L\neq\emptyset$, $\forall x\in L_{i}$, its $\alpha(x)$ depends on which set $x$ belongs to, i.e., $x\in A_{i}$ or $x\in R_{i}$. According to Theorem A, if regrad $x\in A_{i}$, $\alpha(x)=R_{i+1}$; if regard $x\in R_{i}$, $\alpha(x)=R_{i}$. In this way, the $\alpha$-limit sets of points in meager set $L$ are also characterized.
\end{remark}

\par We can determine the $\alpha$-limit set of each point. If $m$=0, then $f$ is topologically transitive on $X$ which implies $\alpha(x)=X$ for any $x\in X$. Since $X$ is the largest $\alpha$-limit set, $f$ admits exactly $m+1$ different $\alpha$-limit sets.

\section{Lyapunov Function}

\par We first introduce the definition of isolated invariant set, and prove that both proper attracting set and proper repelling set are isolated  invariant sets when they are not intersected. Then we define the weak Morse decomposition of $X$ via leveled $A$-$R$ pair decomposition and obtain a bounded Lyapunov function of $f$.
\begin{definition}[Franks $\&$ Richeson \cite{ref13}]  \rm Let $f\in \mathcal{A}$ then $f:X\rightarrow X$ is a continuous map, for any set $N\subset X$ define \textsl{Inv}$^{m}N$ to be the set of $x\in N$ such that there exists an orbit segment $\{x_{n}\}_{-m}^{m}\subset N$ with $x_{0}=x$ and $f(x_{n})=x_{n+1}$ for $n=-m,\ldots,m-1$. We will call a complete orbit containing $x$ a solution through $x$. More precisely, if $\sigma: \mathbb{Z}\to N$ is given by $\sigma(n)=x_{n}$ and $x_{0}=x$ and $f(x_{n})=x_{n+1}$ for all $n$, we will call $\sigma$ a solution through $x$. We define \textsl{Inv}$N$ to be \textsl{Inv}$^{\infty}N$, the set of $x\in N$ such that there exists a solution $\sigma$ with $\{\sigma(n)\}_{-\infty}^{\infty}\subset N$ and with $\sigma(0)=x$.
\end{definition}
\par Note that from the definition it is clear that $f($\textsl{Inv}$N)=$\textsl{Inv}$N$. The following basic property of \textsl{Inv}$N$ is trivial if $f$ is one-to-one, and also holds for general continuous maps.

\begin{proposition}[Franks $\&$ Richeson \cite{ref13}] \rm If $f$ is a continuous selfmap on compact metric space $X$, $N$ is a compact subset of $X$, then
$$  \textsl{Inv}N=\bigcap_{m=0}^{\infty}\textsl{Inv}^{m}N.
$$
\end{proposition}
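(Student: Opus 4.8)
The plan is to prove the two inclusions $\textsl{Inv}\,N \subseteq \bigcap_{m=0}^{\infty}\textsl{Inv}^{m}N$ and $\bigcap_{m=0}^{\infty}\textsl{Inv}^{m}N \subseteq \textsl{Inv}\,N$ separately. The first inclusion is immediate from the definitions: if $x \in \textsl{Inv}\,N$, there is a full solution $\sigma:\mathbb{Z}\to N$ with $\sigma(0)=x$, and for every $m$ the truncation $\{\sigma(n)\}_{-m}^{m}$ is an orbit segment in $N$ witnessing $x \in \textsl{Inv}^{m}N$. So the whole content is in the reverse inclusion, and this is where compactness of $N$ (hence of $X$) is essential — for a non-injective map there is no canonical backward orbit, so one must extract a coherent bi-infinite solution from the family of finite ones.

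For the reverse inclusion, fix $x \in \bigcap_{m=0}^{\infty}\textsl{Inv}^{m}N$. For each $m \geq 1$ choose an orbit segment $\sigma_m = (x^{(m)}_{-m}, \ldots, x^{(m)}_{m})$ in $N$ with $x^{(m)}_0 = x$ and $f(x^{(m)}_n) = x^{(m)}_{n+1}$. I would then build a bi-infinite solution by a diagonal argument. Since $N$ is compact, the sequence $(x^{(m)}_{-1})_{m \geq 1}$ has a convergent subsequence; passing to that subsequence, then extracting a further subsequence along which $x^{(m)}_{-2}$ converges, and so on, and finally diagonalizing, I obtain a single subsequence $(m_j)$ such that for every $k \in \mathbb{Z}$ the limit $\sigma(k) := \lim_{j\to\infty} x^{(m_j)}_{k}$ exists (the index $k$ eventually lies in the range $[-m_j, m_j]$, so the terms are defined for large $j$). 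Each $\sigma(k)$ lies in $N$ because $N$ is closed. Continuity of $f$ gives $f(\sigma(k)) = \lim_j f(x^{(m_j)}_k) = \lim_j x^{(m_j)}_{k+1} = \sigma(k+1)$, so $\sigma$ is a solution; and $\sigma(0) = \lim_j x = x$. Hence $x \in \textsl{Inv}\,N$, completing the equality.

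The only genuine obstacle is making the diagonal extraction clean: one must be careful that a term $x^{(m)}_k$ is only defined once $m \geq |k|$, so the convergence statements should be phrased "for all sufficiently large $j$." I would also note explicitly that $N$ being a compact subset of a metric space is what licenses the repeated sequential-compactness extractions; this is the sole place the hypothesis is used, and it is exactly the point the remark before the proposition is flagging (the statement is "trivial if $f$ is one-to-one" precisely because then $\sigma(k)$ is forced to be $f^{-k}$-type data and no extraction is needed). Everything else — the forward inclusion, the verification that $\sigma$ takes values in $N$, the check that $\sigma$ is a solution — is routine bookkeeping with definitions and continuity.
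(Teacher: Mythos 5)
Your proof is correct. Note that the paper itself gives no proof of this proposition --- it is quoted verbatim from Franks and Richeson \cite{ref13} --- and your argument (trivial forward inclusion, then a diagonal sequential-compactness extraction of a bi-infinite solution from the finite orbit segments, using that $N$ is closed and $f$ is continuous to pass to the limit) is exactly the standard argument in that reference; the only simplification available is to observe that for $k\geq 0$ the terms $x^{(m)}_{k}=f^{k}(x)$ are independent of $m$, so the extraction is only needed on the negative indices.
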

\begin{definition}\rm
A compact set $N$ is called an isolating neighborhood if $\textsl{Inv}N\subset\textsl{int}N$. A set $S$ is called an isolated invariant set if there exists an isolating neighborhood $N$ with $S=\textsl{Inv}N$.
\end{definition}
\par  Suppose $m$ be the level of $(X,f)$.  Notice that $A_{i}$ and $R_{i}$ may be intersected, i.e., $A_{i}\cap R_{i}\neq\emptyset$ $(0\leq i\leq m)$, but the interior is always disjoint. At this case, both $A_{i}$ and $R_{i}$ are obviously not isolated invariant sets.

\begin{lemma}\rm
Let $f\in \mathcal{A}$ and $m$ be the level of $(X,f)$ $(0\leq m\leq\infty )$. If $A_{i}\cap R_{i}=\emptyset $ $(1\leq i\leq m)$, then both $A_{i}$ and $R_{i}$ are isolated invariant sets of $f$.
\end{lemma}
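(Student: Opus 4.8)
The plan is to exhibit, for each of $A_{i}$ and $R_{i}$, a concrete isolating neighborhood cut out by the metric. Since $A_{i}$ and $R_{i}$ are compact and, by hypothesis, disjoint, $\delta:=d(A_{i},R_{i})>0$, and I would set
$$N:=\{x\in X: d(x,A_{i})\le\delta/3\},\qquad M:=\{x\in X: d(x,R_{i})\le\delta/3\}.$$
Both sets are closed, hence compact; one has $A_{i}\subset\textsl{int}\,N$ and $R_{i}\subset\textsl{int}\,M$ because the open $\delta/3$-balls around $A_{i}$ and $R_{i}$ are contained in the respective interiors; and the triangle inequality gives $N\cap M=\emptyset$, so in particular $N\cap R_{i}=\emptyset$ and $M\cap A_{i}=\emptyset$. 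Everything then comes down to proving $\textsl{Inv}\,N=A_{i}$ and $\textsl{Inv}\,M=R_{i}$.

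For $A_{i}$ the inclusion $A_{i}\subseteq\textsl{Inv}\,N$ is free: by Definition 2.1 an attracting set is invariant, $f(A_{i})=A_{i}$, so $f|_{A_{i}}$ is onto $A_{i}$ and every point of $A_{i}$ sits on a complete orbit inside $A_{i}\subseteq N$. For the reverse inclusion I would use the decomposition of Section 2: since $L_{i}=A_{i}\cap R_{i}=\emptyset$, the space is the disjoint union $X=A_{i}\sqcup R_{i}\sqcup C_{i}$, where $C_{i}=\{x:\omega(x)\subseteq A_{i},\ \alpha(x)\subseteq R_{i}\}$, the containment $\alpha(x)\subseteq R_{i}$ being Theorem A. Given $x\in\textsl{Inv}\,N$ with solution $\sigma:\mathbb{Z}\to N$, $\sigma(0)=x$, one has $x\notin R_{i}$ since $N\cap R_{i}=\emptyset$; and $x\notin C_{i}$, because $\sigma(-n)\in f^{-n}(x)$ for every $n\ge0$ forces every accumulation point of the negative orbit $\{\sigma(-n)\}_{n\ge0}$ into $\alpha(x)\subseteq R_{i}$, whereas $\{\sigma(-n)\}$ lies in the compact set $N$ and must accumulate somewhere in $N$ — impossible since $N\cap R_{i}=\emptyset$. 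Hence $x\in A_{i}$, and $N$ is an isolating neighborhood of $A_{i}$.

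For $R_{i}$ the inclusion $\textsl{Inv}\,M\subseteq R_{i}$ is the mirror image: for $x\in\textsl{Inv}\,M$ one has $x\notin A_{i}$, and $x\notin C_{i}$ because the forward orbit $\{f^{n}(x)\}_{n\ge0}\subseteq M$ would then accumulate in $\omega(x)\subseteq A_{i}$, contradicting $M\cap A_{i}=\emptyset$. The subtle inclusion is $R_{i}\subseteq\textsl{Inv}\,M$: since $R_{i}\subseteq M$, I need a complete orbit through each $x\in R_{i}$ lying in $R_{i}$. By Lemma 3.1(2) $R_{i}$ is bi-invariant, so $f(R_{i})\subseteq R_{i}$ handles the forward iterates, and $f^{-1}(R_{i})\subseteq R_{i}$ together with $f(R_{i})\subseteq R_{i}$ upgrades to $f^{-1}(R_{i})=R_{i}$, so whatever preimages exist stay in $R_{i}$; to choose them recursively and extract a genuine negative orbit by compactness I need $f(R_{i})=R_{i}$. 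This I would get from surjectivity of $f$ (built into the framework, since $A_{0}:=X$ is itself treated as an attracting set, forcing $f(X)=X$): for surjective $f$ every $\alpha$-limit set is fully invariant, and $R_{i}$ is an $\alpha$-limit set by Theorem A. Concretely, writing $R_{i}=\alpha(z)=\bigcap_{n\ge0}K_{n}$ with $K_{n}=\overline{\bigcup_{k\ge n}f^{-k}(z)}$, surjectivity and compactness give $f(K_{n})=K_{n-1}$, whence $f(\alpha(z))=\alpha(z)$.

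The genuine obstacle is precisely this last point: $A_{i}$ is invariant by definition, but $R_{i}$ is only manifestly bi-invariant, and promoting this to $f(R_{i})=R_{i}$ — equivalently, showing $\textsl{Inv}(R_{i})=R_{i}$ rather than a proper subset — is what makes $R_{i}$ an isolated invariant set rather than merely containing one. The two escape arguments and the metric estimates around them are routine once Theorem A and the $A$-$R$ decomposition are available; if one preferred not to assume $f$ surjective, the clean fix would be to pass first to $\bigcap_{n\ge0}f^{n}(X)$, on which $f$ is surjective, before running the whole construction.
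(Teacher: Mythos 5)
Your proof is correct, and it takes a genuinely different route from the paper's. The paper builds its isolating neighborhoods dynamically rather than metrically: for $R_{i}$ it asserts the existence of an open set $U_{i}$ with $A_{i}\subset U_{i}\subset F_{i}=\bigcup_{n\geq0}f^{-n}(\textsl{int}(\mathcal{B}(A_{i})))$ into which every forward orbit starting in $F_{i}$ eventually enters, sets $V_{i}=X\setminus U_{i}$, and argues that any $x\notin R_{i}$ eventually lands in $U_{i}$ and so escapes $V_{i}$ forward in time; dually, for $A_{i}$ it takes an open $Q_{i}$ with $R_{i}\subset Q_{i}\subset X\setminus A_{i}$ absorbing backward orbits and sets $P_{i}=X\setminus Q_{i}$. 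Your escape argument instead runs through Theorem A and compactness: a complete solution trapped in a small metric neighborhood of $A_{i}$ (resp.\ $R_{i}$) would force $\alpha(x)$ (resp.\ $\omega(x)$) to meet that neighborhood, impossible for a connecting orbit since $\alpha(x)\subseteq R_{i}$ (resp.\ $\omega(x)\subseteq A_{i}$) and the neighborhoods are disjoint. The two arguments are morally the same — both show a connecting orbit must leave the candidate neighborhood in one time direction — but yours is more self-contained, since the paper's $U_{i}$ and $Q_{i}$ are asserted without construction while your $\delta/3$-neighborhoods are explicit. You also put your finger on the one genuine gap shared by both proofs: $R_{i}\subseteq\textsl{Inv}\,M$ requires $f(R_{i})=R_{i}$, not merely bi-invariance in the paper's sense ($f(R_{i})\subseteq R_{i}$ and $f^{-1}(R_{i})\subseteq R_{i}$); the paper silently writes $\{f^{k}(x)\}\subset R_{i}$ for all $k\in\mathbb{Z}$ as if backward orbits inside $R_{i}$ were automatic, whereas you derive the needed surjectivity of $f|_{R_{i}}$ from $R_{i}$ being an $\alpha$-limit set of a surjective map (or by first passing to $\bigcap_{n\geq0}f^{n}(X)$). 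That is the honest way to close the argument, and without some such hypothesis the statement can literally fail, since a point of $R_{i}$ with no preimage belongs to no $\textsl{Inv}\,N$ at all.
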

\begin{proof} \rm If $A_{i}\cap R_{i}=\emptyset $ $(1\leq i\leq m)$, for any given proper attracting set $A_{i}$$(1\leq i\leq m)$, we consider the map $f_{i}:=f|_{A_{i-1}}$. Denote $F_{i}=\bigcup_{n\geq0}f^{-n}(\textsl{int}(\mathcal{B}(A_{i})))$, we can find an open set $A_{i}\subset U_{i}\subset F_{i}$ such that for any $x\in F_{i}\backslash U_{i}$, there exists positive integer $n(x)$ so that $f_{i}^{n(x)}(x)\in U_{i}$. Put $V_{i}:=X\backslash U_{i}$. Obviously, $V_{i}$ is compact and $R_{i}\subset$ \textsl{int}$(V_{i})$. In what follows we show that
\begin{center}
{$R_{i}=\textsl{Inv}(f,V_{i}):=\{x\in V_{i}|$ there exists a solution  $\{\sigma(n)\}_{-\infty}^{\infty}\subset V_{i}$ with $\sigma(0)=x\}.$
}
\end{center}

\noindent Since $R_{i}$ is a bi-invariant closed set, we have $\{f^{k}(x)\}\subset R_{i}$ for any $x\in R_{i}$ and each $k\in \mathbb{Z}$. So $x\in \textsl{Inv}(f,V_{i})$. As a result, $R_{i}\subseteq \textsl{Inv}(f,V_{i})$. On the other hand, if $x\notin R_{i}$, then the forward orbit $orb(x)\bigcap F_{i}\neq \emptyset$. By the definition of $U_{i}$, we know that $orb(x)\bigcap U_{i}\neq \emptyset$, so when $n$ is big enough, $f^{n}(x)\in U_{i}$, which implies $f^{n}(x)\notin X\backslash U_{i}$, and $x\notin \textsl{Inv}(f,V_{i})$. Hence $\textsl{Inv}(f,V_{i})\subseteq R_{i}$.
\par Next we show attracting set is an isolated invariant set. Given any proper attracting set $A_{i}$  and corresponding $R_{i}$ $(1\leq i\leq m)$, we consider the map $f_{i}:=f|_{A_{i-1}}$. Denote $G_{i}=X\setminus A_{i}$, by the definition of $R_{i}$, we can find an open set $R_{i}\subset Q_{i}\subset G_{i}$ such that for any $x\in Q_{i}\setminus R_{i}$, there exists positive integer $n(x)$ so that $f_{i}^{-n(x)}(x)\subset R_{i}$. Put $P_{i}:=X\setminus Q_{i}$. Obviously, $P_{i}$ is compact and $A_{i}\subset$ \textsl{int}$(P_{i})$. In what follows we show that
\begin{center}
{$A_{i}=\textsl{Inv}(f,P_{i}):=\{x\in P_{i}|$ there exists a solution  $\{\sigma(n)\}_{-\infty}^{\infty}\subset P_{i}$ with $\sigma(0)=x\}.$
}

\end{center}

\noindent Since $A_{i}$ is an invariant closed set, we have $f(A_{i})=A_{i}$ for any $x\in A_{i}$. Hence for each $k\in \mathbb{Z}$, there exists a solution $\{f^{k}(x)\}\subset A_{i})$. So $x\in \textsl{Inv}(f,P_{i})$. As a result, $A_{i}\subseteq \textsl{Inv}(f,P_{i})$. On the other hand, if $x\notin A_{i}$, then the backward orbit $\bigcup_{n\geq0}f^{-n}(x)\cap G_{i}\neq \emptyset$. By the definition of $Q_{i}$, we know that $\bigcup_{n\geq0}f^{-n}(x)\cap Q_{i}\neq \emptyset$, so when $n$ is big enough, $f^{-n}(x)\subseteq Q_{i}$, which implies $f^{-n}(x) \nsubseteq X\setminus Q_{i}$, and $x\notin \textsl{Inv}(f,P_{i})$. Hence $\textsl{Inv}(f,P_{i})\subseteq A_{i}$.  $\blacksquare$
\end{proof}
\par For instance, let $f$ be an $m$-renormalizable topologically expansive Lorenz map or an $m$-renormalizable unimodal map without homtervals. According to Example 2.4, if $m$ is infinite, then both $A_{i}$ and $R_{i}$ are isolated invariant sets for any $1\leq i\leq m$; if $m$ is finite, then both $A_{i}$ and $R_{i}$ are isolated invariant sets for $ 1\leq i\leq m-1$, but $A_{m}$ and $R_{m}$ may be intersected.
 \par Conley put forward the decomposition of isolated invariant set that are consistent with concept of isolating neighborhood and obtained a Lyapunov function, which is called Conley's fundamental decomposition theorem \cite{ref9}. In fact, $X$ is decomposable when $f\in\mathcal{F}$ is not topologically transitive on $X$, but $X$ is not an isolated invariant set since we can not find an isolating neighborhood $N$ such that $X\subset \textsl{int}N$. Hence we slightly modify Conley's fundamental decomposition theorem.
\begin{lemma}\rm
Let $(A,R)$ be an $A$-$R$ pair decomposition of $X$, and $L$ be the set $A\cap R$, then there exists a bounded function $ V:X\setminus L\rightarrow[0,1]$ such that:
\par (1) $V^{-1}(1)=R\setminus L$.
\par (2) $V^{-1}(0)=A\setminus L$.
\par (3) If $x\in C$, then $V(x)>V(f^{n}(x))$ for $n>0$.
\par (4) If $L=\emptyset$, then $V(x)$ is continuous function.

\end{lemma}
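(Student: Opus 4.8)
The plan is to adapt Conley's construction of a Lyapunov function for an attractor--repeller pair to the present non-invertible setting, in the spirit of Franks--Richeson \cite{ref13}. Recall that $F:=X\setminus R$ is a bi-invariant open set equal to $(A\setminus L)\cup C$, that $X\setminus L=F\cup(R\setminus L)$, and that $\omega(x)\subseteq A$ for every $x\in F$. I would first reduce the whole statement to producing a single function $h:X\to[0,1]$ with $h^{-1}(0)=A$, with $h<1$ on $F$, with $h(f(x))\le h(x)$ for every $x$, and with $h(f(x))<h(x)$ whenever $x\in C$. Granting such an $h$, put $V\equiv1$ on $R\setminus L$ and $V=h$ on $F=(A\setminus L)\cup C$ (note $h\equiv0$ on $A$). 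Then (1) and (2) hold by construction; for (3), the forward orbit of $x\in C$ stays in $F$, so every $f^n(x)$ lies in the domain of the ``$h$-part'' of $V$, and $h(x)>h(f(x))\ge h(f^2(x))\ge\cdots$ yields $V(x)>V(f^n(x))$ for all $n\ge1$ (if the orbit eventually enters $A$ this is still fine, since there $V=h=0<V(x)$).

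To construct $h$, I would fix a continuous $\phi:X\to[0,1]$ with $\phi^{-1}(0)=A$ and, when $L=\emptyset$, choose it moreover so that $\phi\equiv1$ on an open set $W\supseteq R$ with $\overline{W}\cap A=\emptyset$ (possible because then $d(A,R)>0$). Set
\[
g(x)=\sup_{k\ge0}\phi\big(f^k(x)\big),\qquad h(x)=\sum_{m=0}^{\infty}2^{-m-1}\,g\big(f^m(x)\big).
\]
The series converges, $g$ is non-increasing along orbits, and a short telescoping computation gives $h(f(x))=2h(x)-g(x)$; together with $h\le g$ this shows $h(f(x))\le h(x)$, with equality precisely when $g$ is constant along the entire forward orbit of $x$. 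If $x\in C$, then $\omega(x)\subseteq A$ forces $g(f^m(x))\to0$ while $g(x)\ge\phi(x)>0$, so $g$ is non-constant along $orb(x)$ and hence $h(f(x))<h(x)$. Finally $h^{-1}(0)=A$ is immediate from $\phi^{-1}(0)=A$; and when $\phi\equiv1$ near $R$, invariance of $R$ gives $g\equiv1$ hence $h\equiv1$ on $R$, while $h<1$ on $F$, so $h^{-1}(1)=R$.

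It remains to establish (4), continuity of $V=h$ when $L=\emptyset$. As a supremum of continuous functions $g$ is lower semicontinuous, hence so is $h$; the content is upper semicontinuity, equivalently control of the tail $\sup_{k>K}\phi(f^k(y'))$ for $y'$ near a given $y$. Here I would invoke a trapping neighbourhood $N$ of $A$ (open, $\overline{f(N)}\subseteq N$, $\bigcap_{n\ge0}f^n(N)=A$) and choose $\phi\equiv1$ off $N$: the sets $f^n(N)$ decrease to $A$, so forward orbits of points near $A$ are confined arbitrarily close to $A$, which kills the tail and makes $h$ continuous on $F$. At a boundary point $r\in\partial F\subseteq R$, continuity of $f$ and invariance of $R$ show that, for every $M$, the segment $x,f(x),\dots,f^M(x)$ lies in $W$ once $x$ is close enough to $r$, so $h(x)\ge1-2^{-M-1}\to1=h(r)$; and $h\equiv1$ on the interior of $R$. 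Thus $h$ is continuous on $X=X\setminus L$.

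The main obstacle is this last point. Because the notion of attracting set adopted here is weaker than Smale's, the existence of a genuine trapping neighbourhood of $A$ --- the precise tool that tames the tail of $g$ and delivers continuity of $h$ on $F$ --- is not automatic and must be extracted from the standing hypotheses on $f\in\mathcal{F}$ (or proved beforehand); the boundary behaviour of $h$ on $\partial F$ is the secondary delicate point. Everything else reduces to routine bookkeeping with the bi-invariant sets $F$, $A$, $R$, $C$.
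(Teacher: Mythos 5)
Your construction is, at its core, the same three-step Conley-type scheme the paper uses: a base function that vanishes exactly on $A$ and equals $1$ exactly on $R$ (the paper takes $g(x)=d(x,A)/(d(x,A)+d(x,R))$ on $X\setminus L$, you take a generic continuous $\phi$ with $\phi^{-1}(0)=A$ and $\phi\equiv1$ near $R$), then a supremum along forward orbits to force monotonicity, then a weighted series along the orbit to force strict decrease on $C$ (the paper uses weights $e^{-m}$, you use $2^{-m-1}$ and get the clean recursion $h(f(x))=2h(x)-g(x)$, which is a slightly tidier way to see both monotonicity and the equality case). Your separate prescription $V\equiv1$ on $R\setminus L$ and $V=h$ on $F$ is only a cosmetic difference, since the forward orbit of any $x\in C$ stays in $F$. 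Parts (1)--(3) are correct and complete in your version; your verification of strictness in (3) via $\omega(x)\subseteq A\Rightarrow g(f^m(x))\to0$ while $g(x)\geq\phi(x)>0$ is exactly the right argument and is in fact more explicit than the paper's.

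The one genuine gap is in (4), and you have named it yourself. A supremum of continuous functions such as $h(x)=\sup_{n}g(f^{n}(x))$ is only lower semicontinuous in general; upper semicontinuity requires controlling the tail $\sup_{n>N}g(f^{n}(y))$ uniformly for $y$ near $x$, and that is precisely what a trapping neighbourhood of $A$ (an open $N$ with $\overline{f(N)}\subseteq N$ and $\bigcap_{n}f^{n}(N)=A$) would provide. Under Definition 2.1 of the paper an attracting set is only required to attract pointwise on some open set, and the existence of a trapping neighbourhood does not follow without an additional stability or uniformity argument; orbits starting near $A$ could a priori make large excursions toward $R$ before returning, which would make $h$ discontinuous. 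So your proof of (4) is conditional on a lemma you have not proved. You should be aware, however, that the paper's own proof of (4) consists of the single assertion that $g$ and $h$ are ``clearly'' continuous, so it silently skips exactly the step you flag; if you can extract a trapping neighbourhood from the standing hypotheses on $f\in\mathcal{F}$ (or from compactness of $X$ together with the nested structure of the attracting sets), your argument would actually repair the weakest point of the published proof rather than merely reproduce it.
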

\begin{proof}\rm
Observe that if $A$ or $R$ equals the empty set, then the result is trivially true. So assume that $A\neq \emptyset\neq R$. The proof is divided into three steps. The first is to define a function $g:X\setminus L\to [0,1]$ by
$$
g(x)=\frac{d(x,A)}
{d(x,A)+d(x,R)},
$$
where $d(x,A)$ is the distance between $x$ and $A$. We consider $g(x)$ confined on $X\setminus L$ since $g(x)=0/0$ has no meaning for $\forall x\in L$. Clearly, $g^{-1}(0)=A\setminus L$, and $g^{-1}(1)=R\setminus L$.
\par Second, define $h:X\setminus L\rightarrow[0,1]$ by
$$
h(x)=\sup_{n\geq0}\{g(f^{n}(x))\}.
$$
It is obvious that $h^{-1}(0)=A\setminus L$, and $h^{-1}(1)=R\setminus L$ and $h(f^{m}(x))\leq h(x)$ for all $m\geq0$.
\par The third and final step is to define
$$
V(x)=\sum_{m>0}^{\infty}e^{-m}h(f^{m}(x)).
$$
Clearly (1) and (2) are satisfied. If $x\in C$, then
\[V(x)-V(f^{n}(x))=\sum_{m>0}^{\infty}e^{-m}h(f^{m}(x))-\sum_{m>0}^{\infty}e^{-m}h(f^{m+n}(x))\]
\[=\sum_{m>0}^{\infty}e^{-m}[h(f^{m}(x))-h(f^{m+n}(x))]\]
 \[\geq0.\]

Now $V(x)-V(f^{n}(x))=0$ if and only if $h(f^{m}(x))-h(f^{m+n}(x))=0$ for all $m\geq0$, i.e., $h(f^{m}(x))=\bar{h}$ a constant for all $m\geq0$. This implies that $\omega(x)\cap (A\cup R)=\emptyset$, which contradicts to the definition of connecting orbits. Thus $V(x)-V(f^{n}(x))>0$, and conclusion (3) holds. When $A\cap R=\emptyset$, which implies $L=\emptyset$ and $V(x)$ is defined on the whole space $X$. It is clear that $g(x)$ and $h(x)$ is continuous, hence $V(x)$ is a continuous function, conclusion (4) is verified. The proof is completed. $\hfill\square$
\end{proof}
\par Conley \cite{ref9} extented fundamental decomposition theorem to Morse decomposition via Morse sets, and emphasized that Morse sets are disjoint compact invariant sets. However, when $A_{i}$ and $R_{i}$ $(0\leq i\leq m)$ are intersected, $A_{i}$ is clearly not an isolated invariant set, but $A_{i}$ may still be decomposable. Hence we slightly modify the definition of Morse sets and obtain the weak Morse decomposition of $X$ via leveled $A$-$R$ decomposition. Observe that weak Morse decomposition of $X$ is more delicate, and it is almost the same as Morse decomposition of isolated invariant set when $A_{i}\cap R_{i}=\emptyset$ for any $0\leq i\leq m$.
\begin{definition} \rm A Morse decomposition of $X$ is a finite collection of interior disjoint compact invariant sets called weak Morse sets, $$ \mathcal{M}(X):=\{M_{p}\mid p\in\mathcal{P}\}$$
satisfying two properties. First, for any $x\in X$, both $\alpha(x)$ and $\omega(x)$ of $x$ lie in the weak Morse sets. Second, there exists a strict partial ordering on the indexing set $\mathcal{P}$ such that if $x\in X\setminus \bigcup M_{p}$, $\alpha(x)\subset M_{p}$, and $\omega(x)\subset M_{q}$, then $p>q$.
\end{definition}
\par If the level of $(X,f)$ is $m$, then the leveled $A$-$R$ decomposition of $X$ is $\{(A_{1},R_{1}), (A_{2},R_{2}),\ldots$,
$(A_{m},R_{m})\}$. Denote $M_{i}:=A_{i}\cap R_{i+1}$ and $L_{i}=A_{i}\cap R_{i}$ $(0\leqslant i\leqslant m)$, set $R_{m+1}=X$, then all interior disjoint $M_{i}$ are clearly weak Morse sets, $M_{D}=\bigcup_{i=0}^{m}M_{i}$ is called the Morse decomposition of $X$. As with attracting-repelling pair decomposition, Morse decomposition of $X$ also admit Lyapunov function.

\par {\bf Theorem B} \ \ \rm Let $M_{i}:=A_{i}\cap R_{i+1}(0\leqslant i\leqslant m)$, $L:=\bigcup_{i=0}^{m}L_{i}$ and $C_{j}:=C(M_{j+1},M_{j})$ $(0\leq j\leq m-1)$ be the connecting orbits. Then $M_{D}=\bigcup_{i=0}^{m}M_{i}$ is a weak Morse decomposition of $X$, and there exists a bounded function $V:X\setminus L\to[0,1]$ such that:
\par (1) $\forall$ $x,y\in M_{i}\setminus L$, $0\leqslant i\leqslant m$, $V(x)=V(y)$.
\par (2) $V^{-1}(0)=V^{-1}(1/2^{m})=M_{m}\setminus L$.
\par (3) $V^{-1}(1)=M_{0}\setminus L$, $V^{-1}(1/2)=M_{1}\setminus L$, $\cdots, V^{-1}(1/2^{m-1})=M_{m-1}\setminus L$.
\par (4) If $x\in X\setminus M_{D}$, then $V(x)>V(f^{n}(x))$ for $n>0$.
\par (5) If $L=\emptyset$, then $V(x)$ is continuous.
\par (6) $\forall x\in X\setminus L$ and $0\leq j\leq m-1$. If $V(x)=0$, $\alpha(x)=X$; if $V(x)=1/2^j$, $\alpha(x)=R_{j+1}$; if $V(x)\in(1/2^{j+1},1/2^{j})$, then $x\in C_{j}$ and $\alpha(x)=R_{j+1}$.

\begin{proof}\rm
Observe that if $m=0$, then $M_{D}=\{M_{0}=X\}$ is the simplest Morse decomposition. When $m=1$, the result $M_{D}=\{M_{0}=R_{1},M_{1}=A_{1}\}$ is the same as Lemma 4.5. Considering $m>1$, notice that if $m$ is infinite, $M_{m}=A_{m}=\bigcap_{i=0}^{m}A_{i}$, the construction of $V(x)$ is divided into three steps. The first is to define a function $g:X\setminus L\rightarrow[0,1]$ by
$$
g(x)=\sum_{i=0}^{m-1}\frac{d(x,M_{0})\cdots d(x,M_{i-1})d(x,M_{i+1})\cdots d(x,M_{m})}
{d(x,M_{i})+2^{m}d(x,M_{0})\cdots d(x,M_{i-1})d(x,M_{i+1})\cdots d(x,M_{m})},
$$
where $d(x,M_{i})$ is the distance between $x$ and $M_{i}$, $0\leqslant i\leqslant m$. We consider $g(x)$ confined on $X\setminus L$ since $g(x)=0/0$ has no meaning on $L$. Clearly, $g^{-1}(0)=M_{m}\setminus L$, and $g^{-1}(1/2^{i})=M_{i}\setminus L$. Second, define $h:X\setminus L\rightarrow[0,1]$ by
$$
h(x)=\sup_{n\geq0}\{g(f^{n}(x))\},
$$
it is obvious that $h^{-1}(0)=M_{m}\setminus L$,  $h^{-1}(1/2^{i})=M_{i}\setminus L$ and $h(f^{m}(x))\leq h(x)$ for all $m\geq0$. The third and final step is to define
$$
V(x)=\sum_{m>0}^{\infty}e^{-m}h(f^{m}(x)).
$$
Clearly (1), (2) and (3) are satisfied. If $x\in X\setminus M_{D}$, then
\[V(x)-V(f^{n}(x))=\sum_{m>0}^{\infty}e^{-m}h(f^{m}(x))-\sum_{m>0}^{\infty}e^{-m}h(f^{m+n}(x))\]
\[=\sum_{m>0}^{\infty}e^{-m}[h(f^{m}(x))-h(f^{m+n}(x))]\]
 \[\geq0.\]

Now $V(x)-V(f^{n}(x))=0$ if and only if $h(f^{m}(x))-h(f^{m+n}(x))=0$ for all $m\geq0$, i.e., $h(f^{m}(x))=\bar{h}$ a constant for all $m\geq0$. This implies that $\omega(x)\cap M_{D}=\emptyset$, a contradiction with the definition of connecting orbits. Thus $V(x)-V(f^{n}(x))>0$, and conclusion (4) holds. When $A_{i}\cap R_{i}=\emptyset$ ($0\leqslant i\leqslant m$), which implies $L=\emptyset$ and $V(x)$ is defined on $X$. It is clear that $g(x)$ and $h(x)$ is continuous, hence $V(x)$ is a continuous Lyapunov function, conclusion (5) is verified.
\par As for (6), given any $V(x)\in[0,1]$, we can clearly see $x$ belongs to which $M_{i}$ or which connecting orbit, and obtain its $\alpha$-limit set via $V(x)$. If $V(x)=0$, then $x$ belongs to the topological attractor $A_{m}$, and $\alpha(x)=X$; if $V(x)=1$, $x$ belongs to the first repeller $R_{1}$, i.e., $x\in M_{0}$ and $\alpha(x)=R_{1}$. Furthermore, all the points in $X\setminus M_{D}$ are divided into $m$ different connecting orbits. If $V(x)\in(1/2^{i},1/2^{i-1})$, $1\leq i\leq m$, it implies $x$ belongs to the connecting orbit $C(M_{i},M_{i-1})$ and $\alpha(x)=E_{i}$.
Hence we can study the orbit behavior of each point via $V(x)$.
\par As for (6), $\forall x\in X\setminus L$, we can obtain a value $V(x)\in [0,1]$. If $V(x)=0$, $x\in M_{m}\setminus L=A_{m}\setminus L$ and $x$ belongs to the attractor, then according to Theorem A, $\alpha(x)=X$. If $V(x)=1/2^j$ $(0\leq j\leq m-1)$, then $x\in M_{j}\setminus L$, according to Theorem A, $\alpha(x)=R_{i+1}$. Furthermore, all the points in $X\setminus M_{D}$ are divided into $m$ different connecting orbits. If $V(x)\in(1/2^{j+1},1/2^{j})$, $0\leq j\leq m-1$, then $x\in C_{j}$. By the definition of connecting orbit, $\alpha(x)=R_{i+1}$.
Hence we can characterize the $\alpha$-limit set of each point in $X\setminus L$ via $V(x)$.   $\hfill\square$
\end{proof}

\par

Let $f$ be an $m$-renormalizable expansive Lorenz map or an $m$-renormalizable unimodal map without homtervals. If $m$ is finite, $A_{m}$ is the topological attractor, and $f|_{A_{m}}$ is topologically transitive on $A_{m}$.  According to Example 2.4, there exists possibility such that $A_{m}\cap R_{m}\neq\emptyset$, but the interior is always disjoint. If $A_{m}\cap R_{m}\neq\emptyset$, then $V(x)$ of $f$ is not continuous; if $A_{m}\cap R_{m}=\emptyset$, then $V(x)$ of $f$ is continuous.

\section*{Acknowledgements}{\rm The author thanks the referees for reading this paper very carefully. They suggested many improvements to a previous version of this paper. This work is supported by the Excellent Dissertation Cultivation Funds of Wuhan University of Technology (2018-YS-077)%The  author of this paper  would like to thank  Acta Mathematica Sinica  for their help during writting  actams.cls
\section*{References}

\end{document}